\documentclass[12pt]{amsart}

\usepackage[margin=1in,left=0.85in,right=0.85in]{geometry}
\usepackage{amsmath,amssymb,amsfonts,mathtools}
\usepackage{hyperref}
\usepackage{tikz}
\usepackage{bbm}
\usepackage{xcolor}

\theoremstyle{definition}
\newtheorem{theorem}{Theorem}[section]
\newtheorem{lemma}[theorem]{Lemma}
\newtheorem{proposition}[theorem]{Proposition}

\newtheorem{definition}[theorem]{Definition}

\renewcommand{\S}{\text{Split}}

\newcommand{\R}{\text{Rel}}

\newcommand{\ghost}[1]{}

\title{Real Reliability Roots of Simple Graphs are Dense}
\author{Mohamed Omar}
\address{Department of Mathematics \& Statistics. York University. 4700 Keele St. Toronto, Canada. M3J 1P3}
\email{omarmo@yorku.ca}

\date{\today}

\subjclass[2020]{05C31, 05C40}
\keywords{reliability polynomial, graph polynomial, real roots, network reliability}

\begin{document}

\begin{abstract}
We prove that the closure of the real roots of all-terminal reliability polynomials is exactly $[-1,0] \cup \{1\}$, resolving a conjecture of Brown and McMullin and refining the corresponding density result for multigraphs due to Brown and Colbourn. The crux of the proof is demonstrating that real reliability roots of edge-substitution graphs $G[H]$, where $G$ ranges over connected multigraphs and $H$ ranges over complete graphs missing an edge, are dense.
\end{abstract}

\maketitle

\section{Introduction}
For a connected graph $G$ with vertex set $V(G)$ and edge set $E(G)$, its all-terminal reliability polynomial is
\[
\R(G;q) := \sum_{A \subseteq E(G)} (1-q)^{|A|}q^{|E(G)|-|A|},
\]
where the sum is taken over all subsets $A$ of the edge set for which the subgraph of $G$ with vertex set $V(G)$ and edge set $A$ is connected. This polynomial is a classical invariant in network reliability theory and a standard model of robustness under independent edge failures; see, for example, \cite{BrownSurvey2021,Colbourn1987,PerezRoses2018}. Probabilistically, $\R(G;q)$ is the probability that $G$ remains connected when each edge fails independently with probability $q \in [0,1]$.

From a combinatorial perspective, it is natural to study the real roots of the reliability polynomial for the same reason one studies the zeros of other combinatorial polynomials such as chromatic polynomials, independence polynomials, matching polynomials, and related graph enumerators \cite{ChudnovskySeymour2007,DongKohTeo2005,Godsil1993}. In many settings, real-rootedness is not merely an analytic curiosity; it is a mechanism that forces regularity in the underlying sequence of combinatorial data. For instance, it is classically known that if a polynomial $f(x)=\sum_{k} a_k x^k$ has only real zeros and $a_k\ge 0$ for all $k$, then  log-concavity and often unimodality of the coefficient sequence follow as well \cite{Comtet1974}. This is exactly the sort of phenomenon that has made root geometry so useful elsewhere in algebraic and enumerative combinatorics. Thus, even without knowing in advance that reliability polynomials should enjoy the same behavior as chromatic or independence polynomials, it is mathematically important to ask whether their real-rootedness, or the real-rootedness of a natural transform or specialization, would force analogous inequalities for the combinatorial quantities they encode.

More broadly, this question now sits inside a much richer circle of ideas than the classical theory of graph polynomials alone. Real-rootedness is closely tied to stability and the half-plane property \cite{ChoeOxleySokalWagner2004} and to the emerging theory of Lorentzian polynomials \cite{BrandenHuh2020}; in matroid theory and related settings, these ideas have led to deep log-concavity theorems and Hodge-theoretic explanations for coefficient inequalities \cite{AdiprasitoHuhKatz2018,Huh2015}. From that viewpoint, the reliability polynomial is interesting not just as a single variable invariant, but as a possible shadow of a more rigid multivariate structure. If such a structure exists, then one might hope for the same kind of consequences that have recently appeared elsewhere: log-concavity, Mason-type inequalities, and comparison principles that are difficult to see at the purely enumerative level. So understanding real roots of reliability polynomials allows us to determine whether connectivity based graph enumeration belongs to this central framework linking zeros, convexity type inequalities, and deeper combinatorial structure.

The global picture for roots of the polynomials $\R(G;q)$ over all graphs $G$ is rather subtle. Some families are completely real-rooted: for example, trees and cycles have only real reliability roots \cite{Colbourn1987}. Brown and Colbourn proved that every connected multigraph has a subdivision whose reliability polynomial is real-rooted \cite{BrownColbourn1992}. At the same time, the Brown-Colbourn Conjecture asserted that the complex roots of $\R(G;q)$ are confined to a specific disk. This was disproven by Royle and Sokal \cite{RoyleSokal2004}. Brown and Mol later proved the first nontrivial general upper bound on the modulus of reliability roots while also exhibiting simple graphs with roots of modulus greater than $1$ \cite{BrownMol2017}. All in all, the geometry of the complex roots is quite complicated, even though certain graph families are exceptionally well behaved.

For real roots, the cleanest result is in the multigraph setting. Brown and Colbourn showed that the closure of the set of real reliability roots of connected multigraphs is exactly $[-1,0]\cup\{1\}$ \cite{BrownColbourn1992}. What is not known is whether the same closure statement holds for \emph{simple} graphs. A recent result of Brown and McMullin makes substantial progress in this direction: they prove (among other things) that the real roots of reliability polynomials of simple graphs are dense in the interval $[\beta,0]$, where $\beta \approx -0.5707202942$ \cite{BrownMcMullin2026}. They further conjecture that the full closure in the simple graph case should still be $[-1,0]\cup\{1\}$. This is especially delicate near $-1$: Brown and DeGagn\'e proved that $-1$ is not a reliability root of any simple graph, so in the simple graph setting it can only occur as a limit point \cite{BrownDeGagne2020}. The main theorem of this article is resolving this conjecture.

\begin{theorem}\label{thm:main}
Let $\mathcal{R}$ be the set of real roots of all-terminal reliability polynomials of simple graphs. Then $\overline{\mathcal{R}}=[-1,0] \cup \{1\}$.
\end{theorem}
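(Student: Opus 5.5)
The inclusion $\overline{\mathcal{R}}\subseteq[-1,0]\cup\{1\}$ is immediate from the multigraph result of Brown and Colbourn \cite{BrownColbourn1992}, since every simple graph is a multigraph. Also $1\in\mathcal{R}$, because any connected graph with at least one edge has $\R(G;1)=0$. So the plan reduces to showing that $\mathcal{R}$ is dense in $[-1,0]$. Following the abstract, I will use edge substitution: for a connected multigraph $G$ and a two-terminal graph $H=K_n-e$ with terminals the endpoints $u,v$ of the missing edge, let $G[H]$ replace every edge of $G$ by a copy of $H$ glued at $u,v$. Then $G[H]$ is simple even when $G$ has parallel edges, since distinct copies meet only at vertices of $G$ and $u,v$ are nonadjacent in $H$.

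The first step is the standard substitution formula. Write $C_H(q)=\R(H;q)$ for the probability that $H$ is connected, and $S_H(q)$ for the probability that $H$ splits into exactly two components, one containing $u$ and the other $v$. Contracting the connected copies and deleting the split ones gives
\[
\R(G[H];q)=S_H(q)^{|E(G)|-|V(G)|+1}\,C_H(q)^{|V(G)|-1}\,\R\!\left(G;\tfrac{S_H(q)}{S_H(q)+C_H(q)}\right).
\]
Hence if $q_0$ is a real root of $\R(G;\cdot)$ and $\phi_H(q):=S_H/(S_H+C_H)$, every real solution of $\phi_H(q)=q_0$ (avoiding zeros of $S_H+C_H$) is a real root of $\R(G[H];\cdot)$. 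Brown and Colbourn give real roots of multigraphs densely filling $[-1,0]$, so it suffices to control the real maps $\phi_{K_n-e}$.

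The second step is to compute $S_n:=S_{K_n-e}$ and $C_n:=C_{K_n-e}$ explicitly. $S_n$ is a sum over partitions of the vertex set into a part $A\ni u$ and a part $B\ni v$. Each term is $\R(K_{|A|};q)\,\R(K_{|B|};q)\,q^{|A||B|-1}$, where the $-1$ accounts for the missing edge. For $C_n$, use $\R(K_n)=(1-q)\R(K_n/uv)+q\,C_n$ or a similar recursion. The aim is to show that, for fixed $q\in(-1,0)$, $\phi_n(q)$ behaves controllably as $n\to\infty$, and that the family $\{\phi_n\}$ is rich enough to carry the root set of multigraphs onto a dense subset of $[-1,0]$. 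Concretely, I would show that for each target $t\in(-1,0)$ and $\varepsilon>0$ there are $n$ and a multigraph root $q_0$ such that $\phi_n(q)-q_0$ changes sign on $(t-\varepsilon,t+\varepsilon)$. The intermediate value theorem then produces a real root of $\R(G[K_n-e];\cdot)$ near $t$.

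A cleaner variant is to exploit the fact that the multigraph roots are dense. It is then enough that $\phi_n$ restricted to a small interval around $t$ is continuous and non-constant, since its image is a nondegenerate interval and so contains some multigraph root $q_0$. One must also check that $S_n+C_n$ does not vanish there. This shows the preimage is nonempty and lies within $\varepsilon$ of $t$. Thus density follows once, for each $t\in(-1,0)$, some $n$ makes $\phi_n$ continuous and nonconstant near $t$. One further needs that the image value $\phi_n(t)$ lies where multigraph roots are dense, namely in $[-1,0]$, or more generally in the closure of the multigraph root set.

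The main obstacle is exactly this last condition: one needs $\phi_n$ to map neighbourhoods of $t$ into $[-1,0]$ for every $t$, especially near $-1$, where Brown and McMullin's method stalls at $\beta$. I would first compute $\phi_n(-1)$ and the sign pattern of $S_n,C_n$ on $[-1,0]$ for small $n$. I expect alternating-sign cancellations there, since $\R$ at $q=-1$ counts spanning subgraphs with signs. The hope is to show that $\phi_n([-1,0])\subseteq[-1,0]$, or at least that $\phi_n(t)\in(-1,0)$ for all $t\in(-1,0)$, with $\phi_n$ nonconstant. If that fails for a single $n$, I would combine several $n$, or iterate substitution, $G[H_1][H_2]$, to compose the maps $\phi$ until the image lands in the region covered by multigraph roots.
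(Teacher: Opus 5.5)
\textbf{There is a genuine gap: the proposal sets up the right reduction but never proves its crux.}

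Your reduction is correct and is the paper's own. Since $\phi_H=S_H/(S_H+C_H)=1/\hat{y}_H$, asking that some $\phi_n$ send a neighbourhood of $t$ into $(-1,0)$ is exactly Proposition~\ref{prop:yn}, namely $\hat{y}_N<-1$ on a compact $K$. That proposition is the entire technical content of the proof, and you leave it as a ``hope''. As written, density is therefore not established on any interval.

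The strategies you sketch for it are also off target:
\begin{itemize}
\item The inclusion $\phi_n([-1,0])\subseteq[-1,0]$ already fails for small gadgets. One has $\phi_3(q)=2q/(1+q)<-1$ on $(-1,-1/3)$. Also $\phi_4(q)=2q^2(1+3q)/\bigl(2q^2(1+3q)+(1-q)(1+3q+4q^2)\bigr)>0$ on $(-1/3,0)$.
\item More importantly, for even $n$ large relative to $K$ one has $\phi_n>0$ on $K$. The image then lies in $(0,1)$, where multigraph roots are not dense, so the parity of $n$ is essential.
\item Behaviour at $q=-1$ is irrelevant. Since $n$ may depend on the compact set, no uniformity up to $-1$ is needed.
\end{itemize}

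The missing idea is a large-$n$ asymptotic on a fixed compact $K\subset(-1,0)$. In each recurrence of Proposition~\ref{prop:recurrences}, every nontrivial term carries a factor $\binom{n}{a}q^{a(n-a)}$, up to bounded coefficients and a bounded shift in the exponent. Such sums tend to $0$ uniformly on $K$ by Lemma~\ref{lem:kernel}. A bootstrap first shows that $\R(K_n;q)$ and $\R(K_n-e;q)$ are uniformly bounded on $K$. One then gets, uniformly on $K$, that $\R(K_n;q)\to1$, $\R(K_n-e;q)\to1$ and $q^{2-n}S_n(q)\to2$. The only surviving split terms are the two in which a terminal is isolated ($|A|=1$ or $|B|=1$). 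Hence $\phi_n(q)=2q^{n-2}(1+o(1))$ uniformly on $K$. For large odd $n$, $\phi_n$ therefore maps $K$ into $(-1,0)$, close to $0^-$, and $S_n+\R(K_n-e)\neq 0$ there. Your ``cleaner variant'' then finishes exactly as in the paper: the image of a subinterval under a nonconstant continuous map is a nondegenerate interval, which meets the dense set of multigraph roots.

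A minor error: your substitution identity has the wrong prefactor. Summing over which copies connect their terminals gives $\R(G[H];q)=\sum_{A}C_H^{|A|}S_H^{|E(G)|-|A|}$, over connected spanning $A\subseteq E(G)$. This equals $(S_H+C_H)^{|E(G)|}\R(G;\phi_H)$. For $G$ a single edge your formula gives $C_H^2/(C_H+S_H)$ instead of $C_H$. The consequence you use is unaffected: solutions of $\phi_H=q_0$ with $S_H+C_H\neq 0$ are still roots of $\R(G[H];\cdot)$.
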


Our article is organized as follows. We start with preliminaries, reintroducing several auxiliary graph polynomials and rational functions from the literature that are important for establishing Theorem~\ref{thm:main}. Of particular note is the \emph{virtual edge interaction} of a graph; its behavior is the local driver of the proof of the main theorem. We prove our main results in Section~\ref{sec:main}. We end with future directions in Section~\ref{sec:future}.

\section{Preliminaries}\label{sec:prelim}
Following \cite{BrownMol2017}, for a connected graph $H$ with two special vertices $u,v$ called \emph{terminals}, we define the \emph{split reliability polynomial} of $H$ as
\[
\S(H;q)=\sum_{A \subseteq E(H)} (1-q)^{|A|} q^{|E(H)|-|A|},
\]
where the sum here runs over all $A \subseteq E(H)$ such that the subgraph of $H$ with vertex set $V(H)$ and edge set $A$ has exactly two connected components, one containing $u$ and one containing $v$. The corresponding \emph{virtual edge interaction} of $H$ is
\[
\hat{y}_{H}(q) := \frac{\R(H;q)}{\S(H;q)}+1,
\]
which is defined whenever $\S(H;q)$ is not identically $0$. The class of graphs we will be studying these polynomials on are built from the family $\{H_n\}_{n \geq 3}$ of complete graphs with an edge removed.
\begin{definition}
For $n \geq 3$, let $e=uv$ be an edge in the complete graph $K_n$. Define $H_n:=K_n \backslash e$, with terminals $u$ and $v$. 
\end{definition}
For convenience we let $H_2$ be the graph with two isolated vertices $u,v$. Thus $\R(H_2;q)=0$ and $\S(H_2;q)=1$. We abbreviate the following polynomials evaluated on the graphs $K_n$ and $H_n$, as they will be written frequently:
\[
C_n(q)=\R(K_n;q), \qquad R_n(q)=\R(H_n;q), \qquad S_n(q)=\S(H_n;q), \qquad \hat{y}_n(q)=\hat{y}_{H_n}(q).
\]
We have the following recurrences for these polynomials; see \cite{BrownMol2017}.

\begin{proposition}\label{prop:recurrences}
The following recurrences hold for $n \geq 2$:
\begin{equation}\label{eq:complete-rec}
C_n(q)=1-\sum_{a=1}^{n-1} \binom{n-1}{a-1}C_a(q)q^{a(n-a)},   
\end{equation}
\begin{equation}\label{eq:split-rec}
S_n(q)=\sum_{a=1}^{n-1} \binom{n-2}{a-1} C_a(q)C_{n-a}(q)q^{a(n-a)-1},
\end{equation}
\begin{equation}\label{eq:rminus-rec}
R_n(q)=1 - \sum_{a=1}^{n-1} \binom{n-2}{a-1} C_a(q) q^{a(n-a)-1} - \sum_{a=3}^{n-1} \binom{n-2}{a-2} R_a(q) q^{a(n-a)}.
\end{equation}
Furthermore $C_1(q)=1$.
\end{proposition}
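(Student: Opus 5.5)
We prove each recurrence by conditioning on the connected component containing a distinguished vertex, and then passing from the probabilistic reading on $[0,1]$ to a polynomial identity. For \eqref{eq:complete-rec}, fix a vertex $w$ of $K_n$. We read $C_n(q)$ as the probability that the random subgraph is connected, where each edge survives with probability $1-q$. Then $1-C_n(q)$ is the probability that the component of $w$ has some size $a<n$. We condition on that size. There are $\binom{n-1}{a-1}$ choices of the vertex set $W\ni w$. The induced $K_a$ on $W$ must be connected, which has probability $C_a(q)$. All $a(n-a)$ edges between $W$ and its complement must fail, which has probability $q^{a(n-a)}$. The remaining edges inside the complement are unconstrained. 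These events are disjoint over $W$, so summing gives \eqref{eq:complete-rec}. Both sides are polynomials agreeing on $[0,1]$, so the identity holds for all $q$. The case $C_1=1$ holds by convention, since a one-vertex graph is connected.

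For \eqref{eq:split-rec}, we take $u$ as the distinguished vertex in $H_n$. A split subgraph has exactly two components, $W\ni u$ and $V\setminus W\ni v$, with $|W|=a$. There are $\binom{n-2}{a-1}$ choices of $W$, since $W$ contains $u$ but not $v$. The graph induced on $W$ is complete, because the missing edge $uv$ does not lie inside $W$, so it is connected with probability $C_a(q)$. Likewise the complement induces $K_{n-a}$, connected with probability $C_{n-a}(q)$. The cut edges number $a(n-a)-1$, because $uv$ is absent, and all of them must fail. Multiplying these factors and summing over $W$ gives the formula.

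For \eqref{eq:rminus-rec}, we again condition on the component $W$ of $u$ when the graph is disconnected, and split according to whether $v\in W$. If $v\notin W$, the count is exactly as in the split case. There are $\binom{n-2}{a-1}$ choices, $W$ induces $K_a$, and $a(n-a)-1$ cut edges must fail. The complement is unconstrained, which yields the first sum. If $v\in W$ with $|W|=a$, there are $\binom{n-2}{a-2}$ choices. Now $W$ induces $H_a$ with terminals $u,v$, so it is connected with probability $R_a(q)$, and all $a(n-a)$ cut edges must fail. We need $a\le n-1$ for a genuine disconnection. Also $R_2=0$, since $H_2$ is disconnected, so the sum may start at $a=3$. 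Subtracting both sums from $1$ gives $R_n$.

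The main obstacle is the bookkeeping in the third recurrence. The two cases $v\in W$ and $v\notin W$ must be checked to partition the disconnected outcomes, and the cut-edge exponents must be adjusted correctly for the missing edge $uv$. The base case $n=2$ must also be checked against the conventions for $H_2$: for $n=2$ the recurrence reads $R_2=1-C_1q^{0}=0$, as required.
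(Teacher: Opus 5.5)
Your proposal is correct. Conditioning on the vertex set $W$ of the component containing a fixed vertex gives all three recurrences: you split on whether $v\in W$, adjust the cut sizes for the missing edge $uv$, and extend from $[0,1]$ by polynomial identity, and your $n=2$ checks agree with the conventions for $H_2$. The paper gives no proof of this proposition and simply cites Brown and Mol; your Gilbert-type decomposition is the standard derivation of these recurrences.
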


The graphs whose real reliability roots play a hallmark role in establishing Theorem~\ref{thm:main} come from edge-substitutions involving the family $\{H_n\}_{n \geq 3}$, as described in \cite{BrownMol2017}.

\begin{definition}
Let $G$ be a connected graph, and let $H$ be a graph with terminals $u$ and $v$. The \emph{edge-substitution graph} $G[H]$ is obtained by replacing each edge $ab$ of $G$ by a copy of $H$ and identifying $u,v$ with $a,b$ respectively. We refer to $H$ as the \emph{gadget} of $G[H]$.
\end{definition}

\begin{figure}[t]
\centering
\begin{tikzpicture}[scale=1, every node/.style={circle,fill=black,inner sep=1.6pt}]
  \node (Ga) at (0,0.8) {};
  \node (Gb) at (0,-0.8) {};
  \draw[bend left=24,line width=0.6pt] (Ga) to (Gb);
  \draw[bend right=24,line width=0.6pt] (Ga) to (Gb);
  \node[draw=none,fill=none,left=3pt] at (Ga) {$a$};
  \node[draw=none,fill=none,left=3pt] at (Gb) {$b$};
  \node[draw=none,fill=none] at (0,-1.8) {$G$};

  \node (Hu) at (4,0) {};
  \node (Hv) at (6,0) {};
  \node (Hc) at (5,0.95) {};
  \node (Hd) at (5,-0.95) {};
  \draw[line width=0.6pt] (Hu) -- (Hc);
  \draw[line width=0.6pt] (Hu) -- (Hd);
  \draw[line width=0.6pt] (Hv) -- (Hc);
  \draw[line width=0.6pt] (Hv) -- (Hd);
  \draw[line width=0.6pt] (Hc) -- (Hd);
  \node[draw=none,fill=none,left=3pt] at (Hu) {$u$};
  \node[draw=none,fill=none,right=3pt] at (Hv) {$v$};
  \node[draw=none,fill=none,above=2pt] at (Hc) {$c$};
  \node[draw=none,fill=none,below=2pt] at (Hd) {$d$};
  \node[draw=none,fill=none] at (5,-1.8) {$H_4=K_4 \backslash e$};

  \node (Sa) at (9,0) {};
  \node (Sb) at (13,0) {};

  \node (U1) at (10.2,1.15) {};
  \node (U2) at (11.8,1.15) {};
  \draw[line width=0.6pt] (Sa) -- (U1);
  \draw[line width=0.6pt] (Sa) -- (U2);
  \draw[line width=0.6pt] (Sb) -- (U1);
  \draw[line width=0.6pt] (Sb) -- (U2);
  \draw[line width=0.6pt] (U1) -- (U2);

  \node (L1) at (10.2,-1.15) {};
  \node (L2) at (11.8,-1.15) {};
  \draw[line width=0.6pt] (Sa) -- (L1);
  \draw[line width=0.6pt] (Sa) -- (L2);
  \draw[line width=0.6pt] (Sb) -- (L1);
  \draw[line width=0.6pt] (Sb) -- (L2);
  \draw[line width=0.6pt] (L1) -- (L2);

  \node[draw=none,fill=none,left=3pt] at (Sa) {$a$};
  \node[draw=none,fill=none,right=3pt] at (Sb) {$b$};
  \node[draw=none,fill=none,above=2pt] at (U1) {$c_1$};
  \node[draw=none,fill=none,above=2pt] at (U2) {$d_1$};
  \node[draw=none,fill=none,below=2pt] at (L1) {$c_2$};
  \node[draw=none,fill=none,below=2pt] at (L2) {$d_2$};
  \node[draw=none,fill=none] at (11,-2.0) {$G[H_4]$};
\end{tikzpicture}
\caption{An example of the substitution construction with $n=4$. On the left, the multigraph $G$ has vertices $a$ and $b$ joined by two parallel edges. In the middle, the gadget $H_4=K_4 \backslash e$ has terminals $u$ and $v$ (so $e=uv$). On the right, each edge of $G$ is replaced by a fresh copy of $H_4$, and in each copy the terminal $u$ is identified with $a$ while the terminal $v$ is identified with $b$.}\label{fig:substitution}
\end{figure}
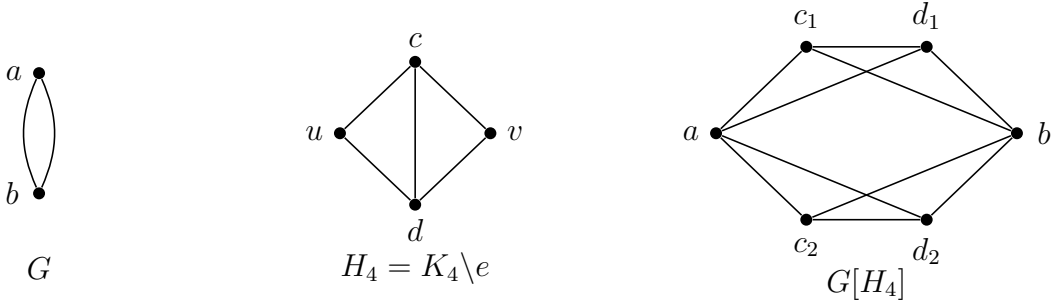

If $H$ is a simple graph and its terminals are not adjacent, then $G[H]$ is a simple graph even when $G$ is a multigraph. An example of an edge-substitution graph is given in Figure~\ref{fig:substitution}.

The following relations regarding reliability polynomials of edge-substitution graphs, due to Brown and Mol \cite{BrownMol2017}, will be pertinent for our use.

\begin{proposition}\label{prop:substitutionpolynomials}
Let $G$ be a connected graph on $m$ edges, and $F(G,z)=\sum_i F_i z^i$ where $F_i$ counts the number of $i$-edge subsets of $G$ whose deletion leaves a connected graph. Then:

\begin{enumerate}
\item for every connected two-terminal graph $H$,
\[
\R(G[H];q)=(\R(H;q))^m \cdot F\left(G,\frac{\S(H;q)}{\R(H;q)}\right),
\]
\item if $r \neq 1$ is a reliability root of $G$, then every solution of
\[
\S(H;q)=\frac{r}{1-r}\R(H;q)
\]
is either a reliability root of $H$ or a reliability root of $G[H]$,
\item when $\S(H;q) \neq 0$, the equation in condition (2) is equivalent to
\[
\hat{y}_H(q)=1/r.
\]
\end{enumerate}
\end{proposition}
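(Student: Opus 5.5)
The plan is to prove the three parts in order: (1) by decomposing each edge subset of $G[H]$ copy by copy, (2) by applying (1) to the single-edge gadget, and (3) by rearranging algebraically.

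\textbf{Part (1).} Work with the polynomial identity edge-subset by edge-subset, i.e.\ $\R(G[H];q)=\sum_A (1-q)^{|A|}q^{|E(G[H])|-|A|}$ over spanning connected $A$. Write $A=\bigcup_{e\in E(G)} A_e$, where $A_e$ is the part of $A$ inside the copy $H_e$ of $H$ substituted for $e$.

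The key structural observation is that the internal (non-terminal) vertices of $H_e$ are adjacent in $G[H]$ only to vertices of $H_e$. So if $A$ is connected spanning, every component of $(V(H_e),A_e)$ must contain a terminal. Hence $(V(H_e),A_e)$ is either connected, or has exactly two components separating $u$ from $v$.

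Let $D\subseteq E(G)$ be the set of edges $e$ whose copy is of the second, ``split'' type. Then:
\begin{itemize}
\item two vertices of $G$ are joined in $(V(G[H]),A)$ if and only if they are joined in $G-D$, since split copies never link their two terminals;
\item every internal vertex is attached to some terminal.
\end{itemize}
Therefore $A$ is connected spanning if and only if every copy is connected or split, and $G-D$ is connected.

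The edge sets of distinct copies are disjoint, so the weight factorizes over copies. Summing over the choices inside each copy gives
\[
\sum_{D:\,G-D \text{ connected}} \S(H;q)^{|D|}\,\R(H;q)^{m-|D|}.
\]
This equals $\R(H;q)^m F\bigl(G,\S(H;q)/\R(H;q)\bigr)$ as rational functions, and both sides are in fact polynomials.

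\textbf{Part (2).} Take $H$ to be a single edge $uv$. Then $\R(H;q)=1-q$ and $\S(H;q)=q$, and $G[H]=G$, so (1) yields
\[
\R(G;q)=(1-q)^mF\bigl(G,q/(1-q)\bigr).
\]
Hence for a root $r\neq 1$ we get $F\bigl(G,r/(1-r)\bigr)=0$.

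Now let $q$ solve $\S(H;q)=\frac{r}{1-r}\R(H;q)$. There are two cases.
\begin{itemize}
\item If $\R(H;q)=0$, then $q$ is a reliability root of $H$.
\item Otherwise $\S(H;q)/\R(H;q)=r/(1-r)$. Then (1) gives $\R(G[H];q)=\R(H;q)^m\cdot F\bigl(G,r/(1-r)\bigr)=0$, so $q$ is a reliability root of $G[H]$.
\end{itemize}

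\textbf{Part (3).} Note first that $r\neq 0$, because $\R(G;0)=1$. Assume $\S(H;q)\neq 0$ and divide the equation by it. This shows the equation is equivalent to $\R(H;q)/\S(H;q)=(1-r)/r$, which is in turn equivalent to $\hat y_H(q)=\R(H;q)/\S(H;q)+1=1/r$.

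\textbf{Main obstacle.} The only step needing real care is the characterization in (1). One must check both that non-connected, non-split copies (those with an internal component missing both terminals) force disconnection, and that connectivity through split copies adds nothing beyond $G-D$.
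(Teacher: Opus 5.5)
Your proof is correct. The paper does not prove this proposition itself; it quotes it from Brown and Mol, so there is no internal argument to compare against. What you give is a self-contained combinatorial derivation:
\begin{itemize}
\item Classify each copy $H_e$ as connected or split, using that internal vertices of $H_e$ are adjacent only within $H_e$.
\item Reduce global connectivity to connectivity of $G-D$.
\item Factor the weight over the edge-disjoint copies to get $\sum_{D} \S(H;q)^{|D|}\R(H;q)^{m-|D|}$.
\end{itemize}
Both directions of the characterization go through. A path in $A$ between vertices of $G$ breaks into segments inside single copies, each joining the two distinct endpoints of its edge. That is impossible in a split copy, so consecutive $G$-vertices on the path are adjacent in $G-D$.

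Your treatment of (2) and (3) is also sound. Specializing (1) to the single-edge gadget recovers $\R(G;q)=(1-q)^m F\bigl(G,q/(1-q)\bigr)$ directly, with no separate appeal to the classical expansion of $\R(G;q)$ in terms of the $F_i$. You evaluate (1) only at points where $\R(H;q)\neq 0$, which avoids any issue with the division inside $F(G,\S/\R)$. When $\R(H;q)=0$, the point is a root of $H$ and you are done. In (3) you supply $r\neq 0$ from $\R(G;0)=1$. The statement leaves this implicit, but it is needed both for $1/r$ to make sense and for the division step converting $\S=\frac{r}{1-r}\R$ into $\R/\S=(1-r)/r$.
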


We end our preliminaries with a technical lemma that will be used extensively to understand the limiting behavior of the combinatorial polynomials $C_n(q),R_n(q),S_n(q),\hat{y}_n(q)$.

\begin{lemma}\label{lem:kernel}
Let $K \subset (-1,0)$ be compact. Fix an integer $s$ and let $b_{N,a}(q)$ be real-valued functions indexed by natural numbers $a,N$ with $N \geq 2$ and $1 \leq a \leq N-1$. Suppose
\[
\sup \ \{ |b_{N,a}(q)| \colon N \geq 2, \ 1 \leq a \leq N-1, \ q \in K \} <\infty.
\]
Then
\[
\sup_{q \in K} \left| \sum_{a=1}^{N-1} \binom{N}{a}b_{N,a}(q) \cdot q^{a(N-a)+s} \right| \to 0 \qquad \text{ as } N \to \infty.
\]
\end{lemma}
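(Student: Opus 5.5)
The plan is to bound the sum termwise, uniformly in $q\in K$. Since $K\subset(-1,0)$ is compact, there is $\rho<1$ with $|q|\le\rho$ for all $q\in K$. Let $B$ be the assumed uniform bound on $|b_{N,a}(q)|$. The triangle inequality then gives
\[
\sup_{q\in K}\left|\sum_{a=1}^{N-1}\binom{N}{a}b_{N,a}(q)q^{a(N-a)+s}\right| \le B\sum_{a=1}^{N-1}\binom{N}{a}\sup_{q\in K}|q|^{a(N-a)+s}.
\]
The factor $|q|^s$ needs care when $s<0$. Since $K$ is compact and excludes $0$, there is also $\delta>0$ with $|q|\ge\delta$ on $K$. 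Hence $|q|^s\le M:=\max(\delta^{s},\rho^{s},1)$ for every fixed integer $s$. It therefore suffices to show $T_N:=\sum_{a=1}^{N-1}\binom{N}{a}\rho^{a(N-a)}\to0$.

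By the symmetry $a\leftrightarrow N-a$, it is enough to handle $1\le a\le N/2$ and double the result. On that range $N-a\ge N/2$, so $\rho^{a(N-a)}\le \rho^{aN/2}$. Combined with $\binom{N}{a}\le N^a$, each term is at most $(N\rho^{N/2})^a$. Put $x_N:=N\rho^{N/2}$. Because $\rho<1$, we have $x_N\to0$, and for $N$ large $x_N<1/2$. This gives
\[
T_N\le 2\sum_{a\ge1}x_N^a=\frac{2x_N}{1-x_N}\le 4x_N\to0 .
\]
Multiplying by $BM$ gives the claimed uniform convergence to $0$.

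The only step needing any thought is the tradeoff between the binomial growth $\binom{N}{a}$ and the quadratic exponent $a(N-a)$. The crude bounds $\binom{N}{a}\le N^a$ and $a(N-a)\ge aN/2$ handle it, because they turn the sum into a geometric series with ratio $N\rho^{N/2}\to0$. I do not expect a genuine obstacle. The one point to keep track of is that $0\notin K$, which keeps $|q|^s$ bounded when $s$ is negative.
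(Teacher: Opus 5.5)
Your proof is correct, but it decomposes the sum differently from the paper.

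The paper fixes an auxiliary cutoff $A\ge 2$ with $2\rho^A<1$ and splits into three ranges:
\begin{itemize}
\item a central range $A\le a\le N-A$, bounded crudely by $2^N\rho^{A(N-A)}=\rho^{-A^2}(2\rho^A)^N$;
\item two boundary ranges, $1\le a\le A-1$ and its mirror image, where each term is bounded by $N^a\rho^{aN-a^2}$, and the total is controlled by $c(A,\rho)N^{A-1}\rho^N$.
\end{itemize}
You instead use the symmetry $a\leftrightarrow N-a$ once, together with the single inequality $a(N-a)\ge aN/2$ on $1\le a\le N/2$ and $\binom{N}{a}\le N^a$. This turns the entire sum into one geometric series with ratio $x_N=N\rho^{N/2}$.

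Your route is shorter, needs no auxiliary parameter, and gives an explicit uniform bound $4BMN\rho^{N/2}$ once $x_N<1/2$. The paper's split keeps the extreme indices $a=1,N-1$, which actually dominate the sum, separate from the rest. By taking $A$ large enough that $2\rho^{A-1}\le 1$, it can recover the sharper exponential rate $\rho^N$ up to a polynomial factor, whereas your bound loses a square root in the rate. That refinement is irrelevant for the qualitative statement.

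Your treatment of $|q|^s$ for negative $s$, using $0\notin K$, matches the paper's constant $c_s$.
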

\begin{proof}
Set 
\[
B = \sup \ \{ |b_{N,a}(q)| \colon n \geq 2, \ 1 \leq a \leq N-1, \ q \in K \}.
\]
Let $c_s=\sup\{|q|^s \colon q \in K\}$ and let $\rho=\sup\{|q| \colon q \in K\}$. Since $K$ is compact and $0 \notin K$, $c_s$ and $\rho$ are finite. Furthermore, $0<\rho<1$ since $K \subset (-1,0)$.

Choose an integer $A \geq 2$ such that $2\rho^A<1$. Consider $N \geq 2A$. Split the sum in consideration based on the index $a$: in the central range we will have $A \leq a \leq N-A$, then in two boundary ranges we will have $1 \leq a \leq A-1$ and $N-(A-1) \leq a \leq N-1$. For any $a$ in the central range, we have $a(N-a) \geq A(N-A)$ so since $0<\rho<1$ this implies
\[
|q|^{a(N-a)+s} \leq c_s \rho^{a(N-a)} \leq c_s \rho^{A(N-A)}.
\]
Consequently for $q \in K$ we have,
\begin{align*}
\left| \sum_{a=A}^{N-A} \binom{N}{a} b_{N,a}(q) \cdot q^{a(N-a)+s} \right| &\leq \sum_{a=A}^{N-A} \binom{N}{a} |b_{N,a}(q)||q|^{a(N-a)+s} \\ &\leq Bc_s \rho^{A(N-A)} \sum_{a=A}^{N-A} \binom{N}{a} \\
&\leq Bc_s2^N\rho^{A(N-A)}.
\end{align*}
On the lower of the boundary ranges, fix $1 \leq a \leq A-1$. Then for $q \in K$,
\[
\left|\binom{N}{a}b_{N,a}(q) \cdot q^{a(N-a)+s}\right| \leq Bc_s\binom{N}{a}\rho^{a(N-a)} \leq Bc_sN^a\rho^{aN-a^2}.
\]
Finally on the upper of the boundary ranges, the same upper bound applies as in the lower of the boundary ranges, because when $a$ is replaced by $N-a$, both $\binom{N}{a}$ and $a(N-a)$ stay the same. Therefore for any $q \in K$ we have 
\begin{align*}
\left| \sum_{a=1}^{N-1} \binom{N}{a}b_{N,a}(q) \cdot q^{a(N-a)+s} \right|  &\leq 2Bc_s \cdot \left(\sum_{a=1}^{A-1} N^a\rho^{aN-a^2}\right) + Bc_s2^N\rho^{A(N-A)} \\
&= 2Bc_s \cdot \left(\sum_{a=1}^{A-1} N^a\rho^{aN-a^2}\right) + Bc_s\rho^{-A^2}(2\rho^A)^N.
\end{align*}
The right hand side is independent of $q \in K$. The first sum is bounded above by $c(A,\rho) \cdot N^{A-1}\rho^N$ where $c(A,\rho)$ is a constant dependent only on $A$ and $\rho$, because the exponent of $\rho$ satisfies the inequality $aN-a^2 \geq N-(A-1)^2$, on the range $1 \leq a \leq A-1$. Hence the first sum vanishes as $N \to \infty$. The final term also vanishes as $N \to \infty$ by the choice of $A$ because $0<2\rho^A<1$. The result follows.
\end{proof}

\section{Main Results}\label{sec:main}

An important technical step toward Theorem~\ref{thm:main} is to show that, for a given compact $K \subset (-1,0)$, one can choose $n$ large enough so that $\hat{y}_n(q)$ is sufficiently negative on $K$. This is the content of the next proposition.

\begin{proposition}\label{prop:yn}
Let $K \subset (-1,0)$ be compact. Then there exists an integer $N$ such that $\hat{y}_N(q)<-1$ for every $q \in K$.
\end{proposition}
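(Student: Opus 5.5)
The plan is to compute the asymptotics of $C_n$, $R_n$ and $S_n$ uniformly on $K$ using Lemma~\ref{lem:kernel}. We will show that $R_n(q)\to 1$ uniformly on $K$, while $S_n(q)=q^{n-2}\bigl(2+o(1)\bigr)$ uniformly on $K$. Then
\[
\hat{y}_n(q)=1+\frac{R_n(q)}{S_n(q)}\approx 1+\frac{1}{2q^{n-2}}.
\]
If we take $n$ odd, then $q^{n-2}<0$ and $|q|^{n-2}\le \rho^{n-2}\to 0$, where $\rho=\max_K|q|<1$. So $\hat{y}_n\to-\infty$ uniformly on $K$ along odd $n$, and any sufficiently large odd $N$ works.

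\textbf{Step 1: uniform boundedness of $C_n$ and $R_n$ on $K$.} This is the main obstacle, since Lemma~\ref{lem:kernel} needs bounded coefficients and the recurrences feed $C_a$ and $R_a$ back into themselves. We handle it by a bootstrap.

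Apply Lemma~\ref{lem:kernel} with $b_{N,a}\equiv 1$ and $s=0$ or $s=-1$. This gives $\varepsilon_n:=\sup_K\sum_{a=1}^{n-1}\binom{n}{a}|q|^{a(n-a)+s}\to 0$. Choose $n_0$ with $\varepsilon_n\le 1/4$ for all $n\ge n_0$. Let $M\ge 2$ exceed $\sup_K|C_a|$ and $\sup_K|R_a|$ for all $a<n_0$; these are finitely many polynomials, so such $M$ exists.

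Now argue by induction on $n\ge n_0$. In \eqref{eq:complete-rec} we have $\binom{n-1}{a-1}\le\binom{n}{a}$, so $|C_n|\le 1+M\varepsilon_n\le M$. In \eqref{eq:rminus-rec} we also have $\binom{n-2}{a-1},\binom{n-2}{a-2}\le\binom{n}{a}$. Two such sums (one with $s=-1$ for the $C_a$-term, one with $s=0$ for the $R_a$-term) give $|R_n|\le 1+2M/4\le M$.

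With boundedness in hand, a second application of Lemma~\ref{lem:kernel} to the same recurrences gives $C_n\to1$ and $R_n\to 1$ uniformly on $K$.

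\textbf{Step 2: asymptotics of $S_n$.} In \eqref{eq:split-rec}, isolate the terms $a=1$ and $a=n-1$. Each equals $C_{n-1}(q)q^{n-2}$ because $C_1=1$, so together they contribute $2C_{n-1}(q)q^{n-2}$. Divide the remaining terms ($2\le a\le n-2$) by $q^{n-2}$. Since $a(n-a)-1-(n-2)=(a-1)(n-a-1)$, we may set $a'=a-1$ and $N'=n-2$. The remainder then becomes
\[
\sum_{a'=1}^{N'-1}\binom{N'}{a'}C_{a'+1}C_{N'-a'+1}\,q^{a'(N'-a')}.
\]
This has exactly the form of Lemma~\ref{lem:kernel}, with bounded coefficients by Step 1, so it tends to $0$ uniformly. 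Hence $S_n/q^{n-2}\to 2$ uniformly on $K$.

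\textbf{Step 3: conclusion.} Write
\[
\hat{y}_n(q)=1+\frac{R_n(q)}{q^{n-2}}\cdot\frac{q^{n-2}}{S_n(q)}.
\]
For large $n$, uniformly on $K$, we have $R_n\in(1/2,3/2)$ and $S_n/q^{n-2}\in(1,3)$. For odd $n$, $q^{n-2}\in[-\rho^{n-2},0)$. Therefore
\[
\hat{y}_n(q)\le 1-\frac{1}{6\rho^{n-2}},
\]
which is less than $-1$ once $n$ is large. Taking such an odd $n$ as $N$ proves the statement.
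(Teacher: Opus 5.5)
Your proposal is correct and follows essentially the same route as the paper's proof. That route is a bootstrap with Lemma~\ref{lem:kernel} for uniform boundedness of $C_n,R_n$ on $K$, then $C_n,R_n\to 1$ and $q^{2-n}S_n\to 2$ by isolating the $a=1$ and $a=n-1$ terms of \eqref{eq:split-rec} and reindexing via $(a-1)(n-a-1)$, and finally a large odd $N$. The only nit is that Lemma~\ref{lem:kernel} with $b_{N,a}\equiv 1$ controls the signed sum, so to obtain your $\varepsilon_n$ (a sum of $|q|^{a(n-a)+s}$) you should take $b_{N,a}(q)=(-1)^{a(N-a)+s}$, as the paper does.
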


To see how Proposition~\ref{prop:yn} leads to the proof of Theorem~\ref{thm:main}, we consider an example. To prove that real reliability roots are dense in $(-1,0)$ we need to prove that any open interval $I \subset (-1,0)$ contains a real reliability root. Suppose we were given the open interval $I=(-0.5,-0.3)$. Let $K=[-0.40,-0.35] \subset I$. Since $K \subset (-1,0)$ is compact, Proposition~\ref{prop:yn} certifies that there is an integer $N$ such that $\hat{y}_N(q)<-1$ for all $q \in K$. It happens to be the case that $N=5$ works for this specific $K$. Indeed, a direct computation using the formulas in Proposition~\ref{prop:recurrences} gives
\[
\R(H_5;q)=(1-q)^4(18q^5+24q^4+18q^3+10q^2+4q+1),
\]
\[
\S(H_5;q)=2q^3(1-q)^3(12q^3+9q^2+3q+1),
\]
so
\[
\hat{y}_5(q)=
\frac{(1+q)(6q^5+6q^4+6q^3+4q^2+2q+1)}
{2q^3(12q^3+9q^2+3q+1)}.
\]
Numerically, one can then certify that 
\[
\hat{y}_5([-0.40,-0.35])=[a,b]\subset(-\infty,-1).
\]
where $a \approx -9.6454$ and $b \approx -6.8930$. Now real reliability roots of multigraphs are dense in $(-1,0)$ so their reciprocals are dense in $(-\infty,-1)$. This means we can find a real reliability root $r$ of a multigraph $G$ so that $1/r \in (a,b)$. For completeness, we will find an explicit such $r$ and $G$. If one takes $G=\mathcal{C}_8$, the cycle graph on $8$ vertices, then connected subgraphs indexing the sum $\R(\mathcal{C}_8,q)$ are $\mathcal{C}_8$ itself and $8$ paths, each obtained by deleting some edge from $\mathcal{C}_8$. From this, 
\[
\R(\mathcal{C}_8;q)=(1-q)^8+8q(1-q)^{7}=(1-q)^{7}(1+7q),
\]
So $G$ has real reliability root $r=-1/7$. Because $1/r=-7$ lies in the image of $\hat{y}_5$ on $K$, there exists $q_0 \in K$ such that $\hat{y}_5(q_0)=1/r$ (an approximate numerical value for $q_0$ in this case is $q_0\approx -0.39713$). By conditions (2) and (3) in Proposition~\ref{prop:substitutionpolynomials}, $q_0$ is a real reliability root of the simple graph $H_5$ or the simple edge-substitution graph $\mathcal{C}_8[H_5]$. Since $q_0 \in K \subset I$, we have found a real reliability root in our open interval $I$.

This is a small-scale picture of the main theorem. To show roots can come arbitrarily close to any point in $(-1,0)$, select a nonempty open interval $I \subset (-1,0)$ and find a real reliability root in $q_0 \in I$ as follows. Select a compact $K \subset I$. Use Proposition~\ref{prop:yn} to find an $N$ so that $\hat{y}_N$ sends $K$ into $(-\infty,-1)$. Then, select a reciprocal target $1/r \in (-\infty,-1)$ in the image of $\hat{y}_N$ where $r$ is the real reliability root of a multigraph $G$. Finally, pull this back to get a real reliability root $q_0 \in K \subset I$ such that $\hat{y}_N(q_0)=1/r$. By (2) and (3) of Proposition~\ref{prop:substitutionpolynomials}, $q_0$ is a real reliability root of a simple graph.

\begin{proof}[Proof of Proposition~\ref{prop:yn}]
The proof has three parts. First, we use Proposition~\ref{prop:recurrences} and Lemma~\ref{lem:kernel} to prove the families of functions $\{C_n\}_{n \geq 1}$ and $\{R_n\}_{n \geq 1}$ are uniformly bounded on $K$. Then, we use this to show that uniformly on $K$, 
\[
C_n(q) \to 1, \qquad R_n(q) \to 1, \qquad q^{2-n}S_n(q) \to 2.
\]
It will then follow that $2q^{n-2}\hat{y}_n(q) \to 1$ uniformly on $K$ and this will lead to a sufficiently large $N$ for which $\hat{y}_N(q)<-1$ for all $q \in K$.

We start by showing $\{C_n\}_{n \geq 1},\{R_n\}_{n \geq 1}$ are uniformly bounded on $K$. For $n \geq 2$, let
\[
X_n:=\sup_{q \in K} |C_n(q)|, \qquad Y_n:=\sup_{q \in K} |R_n(q)|, \qquad M_n:=\max\{X_k,Y_k \colon 1 \leq k \leq n\}.
\]
From \eqref{eq:complete-rec} of Proposition~\ref{prop:recurrences}, and the fact that $\binom{n-1}{a-1}=\binom{n}{a}\frac{a}{n}$, we have that for any $q \in K$,
\[
|C_n(q)| \leq 1 + M_{n-1} \sum_{a=1}^{n-1} \binom{n}{a} \frac{a}{n} |q|^{a(n-a)}=1+M_{n-1}\sum_{a=1}^{n-1} \binom{n}{a} \frac{a}{n} (-1)^{a(n-a)} q^{a(n-a)},
\]
the latter equality because $q<0$. Taking the supremum over $q \in K$ we have $X_n \leq 1+M_{n-1}\alpha_n$ where 
\[
\alpha_n=\sup_{q \in K} \sum_{a=1}^{n-1} \binom{n}{a} \frac{a}{n} (-1)^{a(n-a)} q^{a(n-a)}.
\] 
In $\alpha_n$ we can replace the sum by its absolute value because the sum is positive since $q<0$. Therefore, we can apply  Lemma~\ref{lem:kernel} with $b_{n,a}(q)=\frac{a}{n}(-1)^{a(n-a)}$, $N=n$ and $s=0$. We see that $|b_{n,a}(q)|$ is uniformly bounded by $1$ for all $n,a$. Therefore $\alpha_n \to 0$ as $n \to \infty$.

Applying a similar process, with the observation that 
\[
\binom{n-2}{a-1}=\frac{a(n-a)}{n(n-1)}\binom{n}{a} \qquad{ \text{and} } \qquad \binom{n-2}{a-2}=\frac{a(a-1)}{n(n-1)}\binom{n}{a}
\]
we have $Y_n \leq 1+M_{n-1}\beta_n$ where
\[
\beta_n := \sup_{q \in K} \sum_{a=1}^{n-1} \binom{n}{a}\frac{a(n-a)}{n(n-1)}(-1)^{a(n-a)-1} q^{a(n-a)-1} + \sup_{q \in K} \sum_{a=3}^{n-1} \binom{n}{a}\frac{a(a-1)}{n(n-1)}(-1)^{a(n-a)} q^{a(n-a)}. 
\]
Apply Lemma~\ref{lem:kernel} to the first of the two sums with $N=n,s=-1,b_{n,a}(q)=\frac{a(n-a)}{n(n-1)}(-1)^{a(n-a)}$, and the same lemma to the second sum with $N=n,s=0,b_{n,a}(q)=\frac{a(a-1)}{n(n-1)}(-1)^{a(n-a)}$ for $3 \leq a \leq n-1$ and $b_{n,a}(q)=0$ for $1 \leq a \leq 2$, this yields $\beta_n \to 0$ as $n \to \infty$.

For any $n$ let $\gamma_n=\max\{\alpha_n,\beta_n\}.$ Then we have $\gamma_n\to 0$. Furthermore, for $n\ge 2$, we have $X_n\le 1+M_{n-1}\gamma_n$ and $Y_n\le 1+M_{n-1}\gamma_n$, so $M_n=\max\{M_{n-1},X_n,Y_n\}\le \max\{M_{n-1},\,1+M_{n-1}\gamma_n\}.$ Choose $N \geq 2$ such that \(\gamma_n\le \tfrac12\) for all \(n\ge N\). Then for all \(n\ge N\),
\[
M_n\le \max\left\{M_{n-1},\,1+\frac12 M_{n-1}\right\}.
\]
Set
\[
C=\max\{M_1,\dots,M_{N-1},2\}.
\]
We prove by induction that $M_n\le C$ for all $n$. This is immediate for $n\le N-1$. If $n\ge N$ and $M_{n-1}\le C$, then
\[
X_n\le 1+\frac12 M_{n-1}\le 1+\frac12 C\le C,
\]
the last inequality because $C \geq 2$. Similarly $Y_n\le C$. Therefore $M_n=\max\{M_{n-1},X_n,Y_n\}\le C.$ Thus $M_n\le C$ for all $n$. Since $0\le X_n,Y_n\le M_n$, it follows that the polynomials $\{C_n\}_{n \geq 1},\{R_n\}_{n \geq 1}$ are uniformly bounded on $K$.

Next we show that
\[
C_n(q) \to 1, \qquad R_n(q) \to 1, \qquad q^{2-n}S_n(q) \to 2,
\]
uniformly on $K$ as $n \to \infty$. From Proposition~\ref{prop:recurrences},
\[
C_n(q)-1=-\sum_{a=1}^{n-1}\binom{n}{a}\frac{a}{n}C_a(q)q^{a(n-a)}.
\]
The coefficients $-\frac{a}{n}C_a(q)$ are bounded in absolute value by $C$, so Lemma~\ref{lem:kernel} implies $|C_q(n)-1| \to 0$ uniformly on $K$, and hence $C_q(n) \to 1$ uniformly on $K$. Similarly,
\[
R_n(q)-1 = -\sum_{a=1}^{n-1} \binom{n}{a} \frac{a(n-a)}{n(n-1)}C_a(q)q^{a(n-a)-1} - \sum_{a=3}^{n-1} \binom{n}{a} \frac{a(a-1)}{n(n-1)}R_a(q)q^{a(n-a)}.
\]
The coefficients $-\frac{a(n-a)}{n(n-1)}C_a(q)$ and $-\frac{a(a-1)}{n(n-1)}R_a(q)$ are both bounded above by $C$ in absolute value, so applying Lemma~\ref{lem:kernel} with $s=-1$ to the first sum and $s=0$ to the second we get $R_n(q) \to 1$ uniformly on $K$. Finally, to show $q^{2-n}S_n(q) \to 2$, start by isolating the $a=1$ and $a=n-1$ terms in the sum \eqref{eq:split-rec} in Proposition~\ref{prop:recurrences} and multiply by $q^{2-n}$. Together with the fact $C_1(q)=1$, this yields,
\begin{align*}
q^{2-n}S_n(q) &= 2C_{n-1}(q) + \sum_{a=2}^{n-2} \binom{n-2}{a-1} C_a(q)C_{n-a}(q)q^{a(n-a)-1+2-n} \\
&= 2C_{n-1}(q) + \sum_{a=2}^{n-2} \binom{n-2}{a-1} C_a(q)C_{n-a}(q)q^{(a-1)(n-a-1)}.
\end{align*}
Reindexing the sum with $N=n-2$ and $b=1$, we have 
\[
\left|q^{2-n}S_n(q)-2C_{n-1}(q)\right|=\left|\sum_{b=1}^{N-1} \binom{N}{b} C_{b+1}(q)C_{N+1-b}(q)q^{b(N-b)}\right|.
\]
By Lemma~\ref{lem:kernel}, this vanishes because $|C_{b+1}(q)C_{N+1-b}(q)|$ is uniformly bounded by $C^2$ on $K$. Since $2C_{n-1}(q)$ converges to $1$ uniformly on $K$, it follows $q^{2-n}S_n(q) \to 2$ uniformly on $K$. 

We now find a positive integer $N$ such that $\hat{y}_N(q)<-1$ for all $q \in K$. Since $q^{2-n}S_n(q) \to 2$ uniformly on $K$, we can select $N_0$ so that for all $q \in K$
\[
|q^{2-n}S_n(q)-2|<1 \qquad \forall n \geq N_0.
\]
For such $n$, $q^{2-n}S_n(q)$, and hence $S_n(q)$, are nonzero. So, $\hat{y}_n(q)$ is defined. We then have
\begin{align*}
|2q^{n-2}\hat{y}_n(q)-1| &=\left|2q^{n-2} \cdot \left(\frac{R_n(q)}{S_n(q)}+1\right)-1\right|\\
&= \left| \frac{2R_n(q)}{q^{2-n}S_n(q)} - 1 + 2q^{n-2} \right| \\
&\leq \frac{|2R_n(q)-q^{2-n}S_n(q)|}{|q^{2-n}S_n(q)|} + 2|q|^{n-2} \\
&\leq |2R_n(q)-q^{2-n}S_n(q)|+2|q|^{n-2} \\
&\leq |2R_n(q)-2|+|2-q^{2-n}S_n(q)|+2|q|^{n-2},
\end{align*}
the second-to-last inequality holding because $|q^{2-n}S_n(q)-2|<1$ for $n \geq N_0$ and $q \in K$ implies $q^{2-n}S_n(q)>1$ for the same values of $n$ and $q$. Now $2R_n(q) \to 2$ and $q^{n-2}S_n(q) \to 2$ uniformly on $K$. Moreover since $K$ is a compact subset of $(-1,0)$, there is a universal $\rho \in (0,1)$ so that $|q|\leq \rho$ for $q \in K$ so $2|q|^{n-2} \to 0$ uniformly on $K$. Consequently
\[
2q^{n-2}\hat{y}_n(q) \to 1
\]
uniformly on $K$. So there exists $N_1$ such that for all $q \in K$,
\[
|2q^{n-2}\hat{y}_n(q)-1|<\frac{1}{2} \qquad \forall n \geq N_1.
\]
Moreover, there exists $N_2$ such that for $n \geq N_2$, 
\[
\rho^{n-2}<\frac{1}{4} \qquad \forall n \geq N_2,
\]
because $0<\rho<1$. 

Select an odd integer $N \geq 3$ so that $N \geq \max(N_0,N_1,N_2)$. We claim that if $q \in K$ then $\hat{y}_N(q)<-1$. Indeed select $q \in K$. Since $N \geq N_0,N_1$, we have $2q^{N-2}\hat{y}_N(q)>1/2$. Now since $q<0$ and $N$ is odd, $q^{N-2}<0$. Therefore
\[
\hat{y}_N(q)<\frac{1}{4q^{N-2}} \leq \frac{-1}{4\rho^{N-2}}.
\]
Finally, $N \geq N_2$ implies $\hat{y}_N(q)<-1$. 
\end{proof}

We now prove our main theorem.

\begin{proof}[Proof of Theorem~\ref{thm:main}]
Let $\mathcal{R}$ be the set of real reliability roots. By Brown and Colbourn \cite{BrownColbourn1992}, the closure of the real reliability roots of multigraphs is $[-1,0]\cup\{1\}$. Since simple graphs are multigraphs, we get the inclusion $\overline{\mathcal{R}} \subseteq [-1,0] \cup \{1\}$. Now suppose that  every nonempty open interval $I \subseteq (-1,0)$ contains the real reliability root of a simple graph. Combined with the fact that every connected simple graph with at least one edge has a real reliability root $1$, this implies $[-1,0] \cup \{1\} \subseteq \overline{\mathcal{R}}$. Therefore $\overline{\mathcal{R}}=[-1,0] \cup \{1\}$.

It therefore suffices to prove that if $I \subset (-1,0)$ is a nonempty open interval then $I$ contains the real reliability root of a simple graph. Since $I$ is nonempty, there is a compact interval $K \subset I$ with nonempty interior. By Proposition~\ref{prop:yn}, there is an integer $N$ such that $\hat{y}_N(q)<-1$ for all $q \in K$. In particular, $S_N(q) \neq 0$ for every $q \in K$, so $\hat{y}_N$ is a continuous real-valued function on $K$. Let $J$ be any nonempty open interval with $J \subset K$. Then $\hat{y}_N$ is nonconstant on $J$ because it is a rational function that is nonconstant on its domain (indeed, at $q=0$ we have $R_N(0)=1$ and $S_N(0)=0$ so as $q \to 0$ from the left, $\hat{y}_N(q)$ is nonconstant). Now $\hat{y}_N$ being nonconstant on the nonempty open interval $J$ and $\hat{y}_N$ being continuous in $J$ (since it is continuous on $K \supset J$) implies the image of $J$ under $\hat{y}_N$ is a nondegenerate interval. Together with the fact that $\hat{y}_N(q)<-1$ for all $q \in J$, the image of $J$ under $\hat{y}_N$ is a nondegenerate subinterval of $(-\infty,-1)$. By Brown and Colbourn \cite{BrownColbourn1992}, the real reliability roots of multigraphs are dense in $(-1,0)$, so there exists a real reliability root $r \in (-1,0)$ of a connected multigraph $G$ such that $1/r$ is in the image of $J$ under $\hat{y}_N$. Then, select a $q_0 \in J$ with $\hat{y}_N(q_0)=1/r$. Since $\hat{y}_N(q_0)<-1$, $S_n(q_0) \neq 0$ so by (2) and (3) of Proposition~\ref{prop:substitutionpolynomials}, $q_0$ is either a real reliability root of $H_N$ or $G[H_N]$. Both of these graphs are simple, so $q_0 \in J \subset K \subset I$ is a real reliability root of a simple graph.
\end{proof}
We remark that in the proof of Theorem~\ref{thm:main}, $q_0$ has to be a real reliability root of $G[H_N]$. Indeed, if it was a real reliability root of $H_N$ instead, then $\hat{y}_N(q_0)=R_N(q_0)/S_N(q_0)+1=1$, contradicting $\hat{y}_N(q_0)<-1$. So in fact, not only are real reliability roots dense in $[-1,0] \cup \{1\}$, taking only the roots of edge-substitution graphs $G[H]$ over connected multigraphs $G$ and complete graphs $H$ with an edge removed, creates a desired dense set.

\section{Future Directions}\label{sec:future}

Theorem~\ref{thm:main} settles the closure problem for real reliability roots of simple graphs, but it leaves several natural quantitative questions. Our proof is qualitative: given an open interval $I \subset (-1,0)$, it produces an integer $N$ and a connected multigraph $G$ such that the associated edge-substitution graph $G[H_N]$ has a real reliability root in $I$. However, it does not identify the smallest such $N$, nor does it control how large the multigraph $G$ must be in order to force a root into a prescribed subinterval. Determining effective bounds of this sort would make the density theorem considerably more concrete. At a more structural level, it would be valuable to understand which other simple two-terminal gadgets $H$ have the property that the associated virtual edge interaction $\hat{y}_H$ maps a prescribed interval in $(-1,0)$ into $(-\infty,-1)$, since our argument shows that this is exactly the mechanism that transfers dense sets of multigraph roots to simple graph roots. 

A second direction concerns the fine structure near $q=-1$ and the broader distribution of real roots. Brown and DeGagn\'e showed that $-1$ is never itself a reliability root of a simple graph \cite{BrownDeGagne2020}, so it is natural to seek explicit simple graph families whose real roots converge to $-1$ and to understand the rate of that convergence. More generally, combined with the result of Brown and McMullin that almost every graph has a nonreal reliability root \cite{BrownMcMullin2026}, our theorem suggests asking how many real reliability roots a typical simple graph can have, how often one sees only the trivial root $1$, and which graph families remain genuinely close to real-rooted. Finally, the density phenomenon established here should be compared with coefficient inequality questions for reliability polynomials: even though global real-rootedness fails dramatically, it remains plausible that on suitable graph classes there may be a deeper multivariate, stable, or Lorentzian framework governing these polynomials \cite{BrandenHuh2020,ChoeOxleySokalWagner2004}. Establishing or ruling out such structure would sharpen our understanding of what features of reliability are genuinely enumerative and what features reflect a more rigid geometric theory.

\section*{Acknowledgments}
The author is partially funded by research funds from York University, and NSERC Discovery Grant \#RGPIN-2025-06304.

\end{document}